\numberwithin{equation}{section}
\newtheorem{theorem}{Theorem}[section]
\newtheorem{lemma}[theorem]{Lemma}
\newtheorem{corollary}[theorem]{Corollary}
\def\beq{\begin{equation}}
\def\eeq{\end{equation}}
\def\be{\begin{equation*}}
\def\ee{\end{equation*}}
\title{Interpreting the weak monadic second order theory of the ordered rationals}
\author{J K Truss} 
\begin{document}

\begin{abstract} We show that the weak monadic second order theory of the structure $({\mathbb Q}, <)$ is first order interpretable in
its automorphism group.
\end{abstract}

\maketitle  

\setcounter{footnote}{1}\footnotetext{2010 Mathematics Subject Classification: 03C15, 06A05; \\
keywords: weak monadic second order logic, ordered rationals, interpretation}
\newcounter{number}

\section {Introduction}

The {\em monadic second order theory} of a structure is the set of second order sentences true in the structure, 
for which second order quantification is only performed over subsets of the domain (i.e. unary predicates). Its 
{\em weak monadic second order theory} instead allows quantification just over the {\em finite} subsets of the structure.
Marcus Tressl enquired whether the weak monadic second order theory of $({\mathbb Q}, <)$ can be interpreted inside
its endomorphism monoid. Here we show that this is indeed possible, and in fact it can be interpreted inside its
automorphism group.

We shall use similar methods as described in \cite{Truss}. There the monoids of monomorphisms, endomorphisms 
of $({\mathbb Q}, <)$ were denoted by $M$ and $E$ respectively, and its group of automorphisms by $G$. It was shown
that the action of $E$ on $\mathbb Q$ is interpretable in the monoid $(E, \circ)$. This is done by means of a series
of first order formulae of the language of group theory. We require some of these here, and so shall recap the main 
ideas without full details, for which we refer the reader to \cite{Truss}. In fact for our present purposes, it
suffices to work just with the group $G$. The corresponding results follow easily for $M$ and $E$, since $G$ is a definable 
subset of each of these. 

\section {Background}

Much of the material needed is given, either explicitly or implicitly, in \cite{Glass}. We refer mainly to the
presentation as given in \cite{Truss}. 

The key to unlocking the properties of $G$ is the notion of `orbital' of a member of $G$. This is defined to be the 
convex closure of an orbit, and which carries a `parity', $+1$, $-1$, or 0, depending on whether the map is increasing, 
decreasing, or fixed, on that orbital. More precisely, an {\em orbital} of $f \in G$ is an equivalence class under the 
relation given by $a \sim b$ if for some integers $m, n$, $f^ma \le b \le f^na$. It is easily seen that for any 
orbital $X$ of $f$, for every element $a$ of $X$, $a < fa$, or for every $a$, $a > fa$, or there is just one element
of $X$, which is fixed by $f$. We say that $X$ has {\em parity} $+1$, $-1$ or 0 in these three cases. Orbitals of 
parity $\pm 1$ are called {\em non-trivial}. Since all orbitals are convex, the family of orbitals receives the natural 
induced ordering, and can therefore be viewed as a 3-coloured linear order, referred to as its `orbital pattern'. 
Furthermore, two group elements are conjugate if and only if their orbital patterns are isomorphic (as coloured 
orders) (see for instance \cite{Glass}). We write the conjugate $gfg^{-1}$ of $f$ by $g$ as $f^g$. 
        
We omit the precise details by which the following formulae of the language of group theory are constructed, as these 
are given fully in \cite{Truss}, just sketching the intuition:

${\bf comp}(x)$ is a formula expressing `comparability' with the identity, so that for $f \in G$, ${\bf comp}(f)$ 
holds in $G$ if and only if either for all $a$, $a \le fa$, or for all $a$, $a \ge fa$.

${\bf apart}(x, y)$ expresses that the support of $x$ is either entirely to the left of that of $y$, or entirely to its 
right (including the vacuous case that one of both of these supports is empty), where the {\em support} of a group
element is the set of points moved by it.

${\bf bump}(x)$ expresses that $x$ is a `bump', which is defined to be a non-identity element having exactly one 
non-trivial orbital.

${\bf orbital}(x, y)$ expresses that $x$ is an orbital of $y$, meaning that $x$ is a bump whose support is contained in that
of $y$, and such that $x$ is equal to the restriction of $y$ to its support. 

${\bf disj}(x,y)$ is a formula of the language of group theory such that for $f, g \in G$, ${\bf disj}(f,g)$ holds in
$G$ if and only if $f$ and $g$ have disjoint supports. From this we can derive a formula ${\bf restr}(x,y)$ which says 
that the support of $x$ is contained in that of $y$, and the restrictions of $x$ and $y$ to the support of $x$ are equal. 
This formula ${\bf restr}(x,y)$ is just $\exists z({\bf disj}(x,z) \wedge y = xz)$.  The formula ${\bf cont}(x,y)$
says that the support of $x$ is contained in that of $y$, and this is taken to be 
$\forall z({\bf disj}(y,z) \to {\bf disj}(x,z))$.

There are two particular types of bump which will be needed, which can each be characterized by a formula. A bump $f$ is 
said to be {\em coterminal} if its support is the whole of $\mathbb Q$, which may be expressed by saying that it is a 
bump which is not disjoint from any non-identity member of $G$, written ${\bf coterm}(f)$. It is said to be {\em cofinal}
if its support is bounded above or below but not both. Saying that $f$ is cofinal can be expressed by a formula {\bf cof} 
expressing that it is not coterminal, and it is not disjoint from any conjugate. Cofinal elements are particularly 
important, since they will have support $(-\infty, a)$ or $(a, \infty)$ for some $a \in {\mathbb R}$, so can be used to 
encode the endpoint $a$ (which may be rational or irrational, though we really want just the rational case, and have to
show in addition how this can be expressed). 

\begin{lemma}  \label{2.1}  If $f \in G$ has infinitely many non-trivial orbitals, then it has a non-trivial restriction
$g$ which may be written as $g_1g_2$ where $g_1$ is an orbital of $g$, and $g$ is conjugate to $g_2$. 
\end{lemma}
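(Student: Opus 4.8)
The plan is to translate the statement into a question about coloured linear orders and settle it combinatorially, using the stated criterion that two members of $G$ are conjugate precisely when their orbital patterns are isomorphic as $3$-coloured chains. If $g$ is the restriction of $f$ to a union of some of its non-trivial orbitals, then the non-trivial orbitals of $g$ are exactly the chosen ones and every other rational is fixed; and if $g = g_1g_2$ with $g_1$ a single orbital, then the orbital pattern of $g_2$ is obtained from that of $g$ by deleting the point recording $g_1$ and replacing it by the block of (now fixed) rationals filling the open interval $\mathrm{supp}(g_1)$ — a dense colour-$0$ block of order type $\eta$ (the type of $\mathbb{Q}$). So the whole task reduces to choosing the orbitals so that this operation, ``delete one orbital, blow it up to an $\eta$-block of zeros'', returns an isomorphic coloured chain.

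For the construction I would first extract a well-behaved family of orbitals. Since $f$ has infinitely many non-trivial orbitals, the induced linear order on them is infinite, so by Ramsey's theorem it contains a strictly monotone subsequence, of type $\omega$ or of type $\omega^*$; I treat the $\omega$ case, the other being symmetric. By pigeonhole I pass to an infinite sub-subsequence on which the parity is constant and on which the (left-endpoint, right-endpoint) rationality type of the orbital is constant (only two parities and four endpoint types occur). Finally I thin to alternate terms $X_1 < X_2 < \cdots$, so that between consecutive chosen orbitals lies a discarded one; this guarantees that the fixed-point block of $g$ separating $X_k$ from $X_{k+1}$ contains a nonempty open interval of rationals and so is dense. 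Put $g = f|_{\bigcup_k X_k}$, a non-trivial restriction of $f$; let $g_1 = g|_{X_1}$ be its first orbital and $g_2 = g|_{\bigcup_{k\ge 2}X_k}$. As $g_1,g_2$ have disjoint supports, $g = g_1g_2$ with $g_1$ an orbital of $g$.

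It remains to check $g \sim g_2$, i.e.\ that the two orbital patterns are isomorphic coloured chains. Writing the pattern of $g$ as $L_0\,X_1\,L_1\,X_2\,L_2\cdots$, where $L_0$ is the fixed block left of $X_1$ and $L_i$ the block between $X_i$ and $X_{i+1}$, the pattern of $g_2$ is $(L_0+\eta+L_1)\,X_2\,L_2\,X_3\cdots$, since deleting $X_1$ merges its $\eta$-block of zeros with the adjacent blocks. Because parity and endpoint types are constant, all $L_i$ with $i\ge 1$ are isomorphic — each is $\eta$ with a least and/or greatest point adjoined according to the common endpoint type — so the shift $X_{i+1}\mapsto X_i$ already matches every block and colour beyond the first. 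The one substantive point is that $L_0+\eta+L_1 \cong L_0$, which I would verify using the absorption identities for countable dense orders, $\eta+\eta=\eta$ and $\eta+1+\eta=\eta$, in each of the four uniform endpoint cases; in every case the adjoined least/greatest points are reabsorbed and the merged block reproduces $L_0$. Combining these isomorphisms along the chain (and leaving the common right-hand tail untouched) gives an isomorphism of coloured chains, whence $g$ is conjugate to $g_2$.

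The crux — and the only place care is needed — is this last verification, together with the uniformity arranged beforehand. Conjugacy really is sensitive to endpoint data: a fixed-point block abutting an orbital with a greatest point cannot be matched with one lacking such a point, so mixed endpoint types can make ``delete one orbital'' genuinely change the conjugacy class. Forcing parity and endpoint type to be constant before thinning is exactly what makes every block match under the shift and makes the single merged block $L_0+\eta+L_1$ collapse back to $L_0$; everything else is the routine order-type absorption noted above.
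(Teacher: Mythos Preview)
Your argument is correct and follows essentially the same route as the paper: extract an $\omega$- (or $\omega^*$-) sequence of non-trivial orbitals, thin so that parity, endpoint rationality type, and strict separation are all constant, let $g$ be the restriction to their union with $g_1$ the first orbital, and conclude by comparing orbital patterns. The paper merely asserts that the patterns of $g$ and $g_2$ are isomorphic, whereas you spell out the absorption calculation $L_0+\eta+L_1\cong L_0$ in the four endpoint cases; this is more detailed but not a different approach.
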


\begin{proof} Since $f$ has infinitely many non-trivial orbitals, it has either an increasing or decreasing $\omega$-sequence 
of such non-trivial orbitals. Without loss of generality assume this is increasing,  $X_0 < X_1 < X_2 < \ldots$ say. 
By passing to a suitable subsequence, we may assume that all these orbitals have the same parity ($+1$ or $-1$), $X_0$ 
is bounded below (i.e. does not have $-\infty$ as its left endpoint), and that for each $n$, ${\rm sup} \, X_n < {\rm inf} \, X_{n+1}$. 
Furthermore, if we write $X_n = (a_n, b_n)$, we may suppose that either all $a_n$ are rational, or all are irrational, and 
similarly for the $b_n$ (since there are only 4 possibilities, this can be achieved by `thinning out'). Let $g_1$ be the
restriction of $g$ to $X_0$, and $g_2$ be the restriction of $g$ to $\bigcup_{n > 0}X_n$. Then $g$ and $g_2$ have 
isomorphic orbital patterns, and hence are conjugate, so $g_1$ and $g_2$ are as desired.    \end{proof}

The lemma leads us to consider the formula ${\bf inf}(x)$ which says that $x$ has a restriction $y$ and there is a non-trivial 
orbital $y_1$ of $y$ such that if $y = y_1y_2$, then $y$ is conjugate to $y_2$.

\begin{lemma}  \label{2.2}  For any $f \in G$, $G \models {\bf inf}(f)$ if and only if $f$ has infinitely many non-trivial 
orbitals. \end{lemma}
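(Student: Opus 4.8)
The plan is to prove the two implications separately, obtaining the forward direction almost immediately from Lemma \ref{2.1} and establishing the converse by a counting argument on orbital patterns that exploits the conjugacy invariant recalled in Section 2.

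For the forward direction, suppose $f$ has infinitely many non-trivial orbitals. Lemma \ref{2.1} then supplies a non-trivial restriction $g$ of $f$ together with a factorisation $g = g_1g_2$ in which $g_1$ is an orbital of $g$ and $g$ is conjugate to $g_2$. I would simply feed this into the matrix of ${\bf inf}$: taking $y = g$, $y_1 = g_1$ and $y_2 = g_2$, the clause ${\bf restr}(y,f)$ holds because $g$ is a restriction of $f$, $y_1$ is a non-trivial orbital of $y$, and $y$ is conjugate to $y_2$, so $G \models {\bf inf}(f)$. No further work is needed here.

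The converse is where the content lies. Assume $G \models {\bf inf}(f)$, witnessed by a restriction $y$ of $f$ and a non-trivial orbital $y_1$ of $y$ with $y = y_1y_2$ and $y$ conjugate to $y_2$. First I would pin down $y_2$: since $y_2 = y_1^{-1}y$, and $y$ agrees with $y_1$ on $\mathrm{supp}(y_1)$ while $y_1$ is the identity off its support, $y_2$ is exactly the restriction of $y$ to $\mathrm{supp}(y)\setminus\mathrm{supp}(y_1)$, and $y_1, y_2$ have disjoint supports. Consequently the non-trivial orbitals of $y$ are precisely the single orbital $y_1$ together with the non-trivial orbitals of $y_2$; in particular $y$ has exactly one more non-trivial orbital than $y_2$. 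Now I would invoke the conjugacy invariant from Section 2: conjugate elements have isomorphic orbital patterns, and an isomorphism of coloured orders preserves the number of points coloured $\pm 1$, i.e. the number of non-trivial orbitals. Since $y$ is conjugate to $y_2$, the two have the same number of non-trivial orbitals. But $y$ has one more non-trivial orbital than $y_2$; were that number finite we would obtain a contradiction of the form $n = n-1$, so $y$ (which has at least the orbital $y_1$) must have infinitely many non-trivial orbitals. Finally, because $y$ is a restriction of $f$, its support is a union of non-trivial orbitals of $f$, whence each non-trivial orbital of $y$ is a non-trivial orbital of $f$, and so $f$ too has infinitely many non-trivial orbitals.

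The main obstacle is the converse, and within it the one genuinely careful point is the bookkeeping that the factorisation $y = y_1y_2$ decreases the number of non-trivial orbitals by exactly one, so that the finite-case contradiction is legitimate, together with the passage from $y$ back to $f$ via ${\bf restr}$; the remainder is an immediate appeal to the isomorphism-invariance of the orbital pattern.
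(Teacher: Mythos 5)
Your proof is correct and takes essentially the same route as the paper's: one direction is an immediate application of Lemma \ref{2.1}, and the other follows from the observation that $y_2$ has exactly one fewer non-trivial orbital than its conjugate $y$, which forces the count to be infinite, and then lifts from the restriction $y$ back to $f$. The only difference is that you spell out the bookkeeping (that $y_2$ is the restriction of $y$ to the complement of $\mathrm{supp}(y_1)$, and that non-trivial orbitals of a restriction of $f$ are non-trivial orbitals of $f$) which the paper leaves implicit.
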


\begin{proof} Let $G \models {\bf inf}(f)$, and write $g = g_1g_2$ for a restriction of $f$ with a non-trivial orbital
$g_1$ as provided by the formula. Thus $g$ is conjugate to $g_2$, and as $g_2$ has one fewer non-trivial orbital than
$g$, there must be infinitely many (for each). Hence $f$ also has infinitely many non-trivial orbitals. 

Conversely, by Lemma \ref{2.1}, if $f$ has infinitely many non-trivial orbitals, the formula {\bf inf} must be true
for $f$ in $G$.   \end{proof}

\begin{corollary}  \label{2.3}  For any $f \in G$, $G \models \neg {\bf inf}(f)$ if and only if $f$ has only finitely many 
non-trivial orbitals. \end{corollary}

Having characterized finiteness in one setting, that is, for the number of non-trivial orbitals of a member of $G$, we have 
to transfer it to the interpretation of $\mathbb Q$ found in \cite{Truss}. We recall in outline how this is carried out. The 
method is to use cofinal members of $G$ having a single non-trivial orbital of the form $(-\infty, q)$ or $(q, \infty)$ 
for some rational number $q$, and then to show that there is a formula which identifies two such if and only if 
they correspond to the same value of $q$. To perform the first task, we start by identifying cofinal members of $G$ having a 
single orbital of the form $(-\infty, a)$ or $(a, \infty)$ for the same real number $a$ by means of a formula ${\bf codesame}$. 
For this we note that such elements $f$ and $g$ will either have the same support, which is expressed by the formula 
${\bf cont}(f, g) \wedge {\bf cont}(g, f)$, or `opposite' supports (i.e. one $(-\infty, a)$ and the other $(a, \infty)$), 
which is expressed by a formula ${\bf oppsupport}(f, g)$, which says that they are cofinal bumps which are disjoint, and 
such that no non-identity member of $G$ is disjoint from both of them.

The harder task is to characterize which such elements correspond to $a \in {\mathbb Q}$. (One observes that there are 8
conjugacy classes of cofinal elements $f$, corresponding to $f$ having parity $\pm 1$, support bounded above or below, and
to $a$ rational or irrational.) 

The first task is performed using a formula `{\bf rational}'. Details are given in \cite{Truss}, but we recall the ideas 
in outline here. This is where coterminal elements are required. A typical coterminal element is translation by 1 to the 
right. And, actually, any coterminal element is {\em conjugate} to the element of this form (or its inverse), so in a sense, 
all such elements are (possibly `distorted') translations. Related to this is a formula ${\bf gauge}(x,y)$ which says 
that $x$ and $y$ are commuting coterminal elements whose joint centralizer is commutative. To see that such elements exist, 
consider $\mathbb Q$ replaced by ${\mathbb Q}[\sqrt{2}]$, which being countable dense without endpoints is order-isomorphic 
to $\mathbb Q$, and let $f$ and $g$ be translations by 1 and $\sqrt{2}$ respectively. One establishes, by extending 
to $\mathbb R$ and using a density and continuity argument that ${\bf gauge}(f,g)$ holds (the key point being that the
set of reals of the form $a + b\sqrt{2}$ for $a, b \in {\mathbb Z}$ is dense in $\mathbb R$). It can be shown that this 
situation is essentially typical, that is if ${\bf gauge}(f,g)$ holds then for some irrational $\alpha$, $\mathbb Q$
can be replaced by ${\mathbb Q}[\alpha]$ in such a way that for all $a$, $f(a) = a + 1$ and $g(a) = a + \alpha$.

The formula ${\bf gauge}$ is now used to help us characterize cofinal elements having support $(-\infty, q)$ or $(q, \infty)$ 
for some rational $q$. The main point is that if ${\bf gauge}(f,g)$, then the joint centralizer of $f$ and $g$ is a 
countable group. Up to equivalence under ${\bf codesame}$, there are two orbits of cofinal elements, corresponding
to $q$ rational and $q$ irrational. We use ${\bf gauge}$ to enable us to tell these apart. More precisely, the formula
${\bf rational}(x)$ is built up as follows. It says that $x$ is cofinal, and there are $y$ and $z$ such that 
${\bf gauge}(y,z)$ and for any conjugate $t$ of $x$, there is a conjugacy $u$ of $x$ to a cofinal element $x^u$ 
such that ${\bf codesame}(t, x^u)$, and such that $u$ commutes with both $y$ and $z$. Since as just remarked, such $u$
can take only countably many possible values, the conjugates of $x$ can only encode countably many points, from 
which it follows that $x$ has support $(-\infty, q)$ or $(q, \infty)$ for some rational $q$.

\section{The main result}

We can now put together what we have succeeded in doing, and obtain our main results.

\begin{theorem}  \label{3.1}  The weak monadic second order theory of the structure $({\mathbb Q}, <)$ is first order
interpretable in its automorphism group $G$. More precisely, there are formulae `{\bf finrational}' and 
`{\bf sameset}' of the language of group theory, such that for any $f \in G$, $G \models {\bf finrational}(f)$ if and
only if $f$ is either positive or negative, having only finitely many orbitals, all of whose (finite) endpoints are 
rational; and for any $f, g \in G$, $G \models {\bf sameset}(f,g)$ if and only if 
$G \models {\bf finrational}(f) \wedge {\bf finrational}(g)$ and $f$ and $g$ have the same set of fixed points.
  \end{theorem}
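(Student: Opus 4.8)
The plan is to reduce the theorem to the formulae already assembled in Section 2, the only genuinely new ingredient being a device that detects the finite endpoints of the orbitals of $f$ and tests each of them for rationality. Everything else is bookkeeping with ${\bf comp}$, ${\bf inf}$, ${\bf cof}$, ${\bf cont}$, ${\bf orbital}$ and ${\bf rational}$.

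For ${\bf finrational}$ I would take a conjunction of three clauses. That $f$ is positive or negative is precisely ${\bf comp}(x)$; that $f$ has only finitely many non-trivial orbitals is $\neg{\bf inf}(x)$, whose correctness is Corollary~\ref{2.3}. The substantive clause must say that every finite endpoint of every orbital of $f$ is rational, and for this I would quantify over orbitals $y$ of $x$ by ${\bf orbital}(y,x)$ (which ranges exactly over the restrictions of $x$ to its non-trivial orbitals) and capture the finite endpoints of $y$ by the cofinal bumps lying snugly against them.

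The key observation is that these endpoints are singled out by minimality. If $y$ has support $(a,b)$, then a cofinal bump $z$ with ${\bf cont}(y,z)$ has support $(-\infty,t)$ with $t\ge b$ or $(s,\infty)$ with $s\le a$, and the minimal such supports under inclusion are exactly $(-\infty,b)$ and $(a,\infty)$. Writing ${\bf mincof}(z,y)$ for
\[ {\bf cof}(z)\wedge{\bf cont}(y,z)\wedge\neg\exists w\big({\bf cof}(w)\wedge{\bf cont}(y,w)\wedge{\bf cont}(w,z)\wedge\neg{\bf cont}(z,w)\big), \]
the elements satisfying ${\bf mincof}(z,y)$ are then precisely the cofinal bumps whose unique finite endpoint is a finite endpoint of $y$. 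I would check this case by case on whether $y$ is bounded, cofinal, or coterminal: a bounded orbital yields the two bumps with supports $(a,\infty)$ and $(-\infty,b)$, a cofinal orbital yields only the one bump pinned at its single finite endpoint, and a coterminal orbital yields none and so imposes no constraint. The rationality clause is then $\forall y\,\forall z\big(({\bf orbital}(y,x)\wedge{\bf mincof}(z,y))\to{\bf rational}(z)\big)$, exploiting that ${\bf rational}(z)$ holds exactly when the finite endpoint of the cofinal bump $z$ is rational. Taking ${\bf finrational}(x)$ to be ${\bf comp}(x)\wedge\neg{\bf inf}(x)$ conjoined with this clause then gives the stated characterisation.

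Finally ${\bf sameset}$ is immediate once ${\bf finrational}$ is in hand: two elements have the same fixed-point set iff they have the same support, since the fixed points form the complement of the support, and equality of supports is ${\bf cont}(x,y)\wedge{\bf cont}(y,x)$. So I would set ${\bf sameset}(x,y):={\bf finrational}(x)\wedge{\bf finrational}(y)\wedge{\bf cont}(x,y)\wedge{\bf cont}(y,x)$. The main obstacle is the middle step, namely proving that ${\bf mincof}$ captures exactly the finite endpoints and nothing spurious; this rests on the geometric fact that for every real $a$ there are automorphisms of $(\mathbb{Q},<)$ with support exactly $(-\infty,a)$ or $(a,\infty)$, so the relevant minimal cofinal bumps genuinely exist, together with the inclusion analysis above, after which the correctness of ${\bf rational}$ transfers rationality of the endpoint of $z$ to rationality of the matching endpoint of $y$.
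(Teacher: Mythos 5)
Your endpoint-detection clause is sound: quantifying over the orbitals $y$ of $x$ and pinning their finite endpoints by minimal cofinal bumps does express that every finite endpoint of every non-trivial orbital of $x$ is rational, and the supporting facts you cite (existence of bumps with support exactly $(-\infty,a)$ or $(a,\infty)$, the inclusion analysis, correctness of ${\bf rational}$) are all true. This is a genuinely different mechanism from the paper's, which instead quantifies over pairs $y,z$ with ${\bf oppsupport}(y,z)$ such that ${\bf cont}(x,yz)$, and your ${\bf sameset}$ is literally the paper's. The gap is elsewhere: your ${\bf finrational}$ contains nothing that bounds the number of \emph{trivial} orbitals of $x$, i.e.\ nothing forcing the fixed-point set to be finite. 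The theorem requires finitely many orbitals \emph{including} trivial ones (the paper's proof says this explicitly), so your formula does not match the stated equivalence. Concretely, let $x$ be a single increasing bump with support $(0,1)\cap\mathbb{Q}$ and the identity elsewhere: it satisfies ${\bf comp}(x)$, $\neg{\bf inf}(x)$, and your rationality clause (its one orbital has endpoints $0$ and $1$), hence your ${\bf finrational}(x)$; but its fixed-point set $\mathbb{Q}\setminus(0,1)$ is infinite, so $x$ has infinitely many orbitals and fails the right-hand side of the theorem.

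This is not a cosmetic mismatch, because it destroys the interpretation that the theorem is meant to deliver: solutions of ${\bf finrational}$ are supposed to encode exactly the finite subsets of $\mathbb{Q}$ via their fixed-point sets, so that weak monadic second-order quantifiers can be replaced by quantification over such solutions. With your formula, the element above is a legitimate ``code'' whose coded set is infinite, and the translation of quantifiers is wrong. The paper closes this hole with a third conjunct, $\forall y({\bf disj}(x,y)\to y=1)$, saying that no non-identity element of $G$ has support disjoint from that of $x$, i.e.\ the support of $x$ is dense; combined with $\neg{\bf inf}(x)$ this forces the complement of the support (the fixed-point set) to be finite, since a finite union of open intervals whose union is dense leaves only finitely many rationals outside. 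If you add this conjunct to your ${\bf finrational}$, your argument goes through, with your minimal-cofinal-bump clause serving as a workable substitute for the paper's ${\bf oppsupport}$ clause.
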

  
\begin{proof}  Note that here, when we say `finitely many orbitals', we mean including trivial ones. In \cite{Truss} 
Theorem 1.13 it was shown that $\mathbb Q$ may be represented inside $G$ by means of elements satisfying the formula 
{\bf rational}, two of which are identified if they satisfy {\bf codesame}. We now have all the ingredients to extend 
this to interpret also finite sets of rationals. For this, we let ${\bf finrational}(x)$ be the formula 

$${\bf comp}(x) \wedge \neg {\bf inf}(x) \wedge \forall y({\bf disj}(x,y) \to y = 1) \wedge$$ 
$$(\forall y, z){\bf oppsupport}(y, z) \wedge {\bf cont}(x, yz) \to {\bf rational}(y).$$

\vspace{.1in}

Deciphering the clauses here, they say

\vspace{.05in}

${\bf comp}(x)$: $x$ is either positive or negative,

\vspace{.05in}

$\neg {\bf inf}(x)$: $x$ has finitely many non-trivial orbitals,

\vspace{.05in}

$\forall y({\bf disj}(x,y) \to y = 1)$: $x$ has dense support (and so by the previous line has only finitely many fixed points),

\vspace{.05in}

$(\forall y, z) {\bf oppsupport}(y, z) \wedge {\bf cont}(x, yz) \to {\bf rational}(y)$: all fixed points of $x$ are rational. 

\vspace{.05in}

For ${\bf sameset}(x, y)$ we use the formula 

$${\bf finrational}(x) \wedge {\bf finrational}(y) \wedge {\bf cont}(x, y) \wedge {\bf cont}(y, x).$$  \end{proof}

Let us make more explicit how these formulae effect the interpretation in $G$ of the weak monadic second order theory of
$({\mathbb Q}, <)$. The idea of the proof just given is that we are using elements having finitely many orbitals of
the form  $(-\infty, a_1)$, $(a_1, a_2), \ldots , (a_n, \infty)$ for some rational numbers $a_1 < a_2 < \ldots < a_n$, 
all of the same parity. (This stands for the finite set $\{a_1, a_2, \ldots, a_n\}$.) It is also (and must be) asserted 
that there is a formula (${\bf sameset}$) telling us when two such elements correspond to the same finite sets of 
rationals. So they do `encode' the set of finite sets of rationals, since clearly every finite set of rationals can arise 
in this way (even the empty set). In addition, relating the interpretations of rationals and finite sets of rationals, we
note that the rational $q$ lies in the finite set $\{a_1, \ldots, a_n\}$ of rationals precisely if $q$ can be 
represented by $f$ such that $G \models {\bf rational}(f)$ and $\{a_1, \ldots, a_n\}$ can be represented by $g$ such that 
$G \models {\bf finrational}(g)$, and for some $f'$ such that $G \models {\bf oppsupport}(f, f')$, we have 
$G \models {\bf cont}(g, ff')$.

To see that this amounts to an interpretation of weak monadic second order logic, we observe that any quantification 
over the set of finite subsets of $\mathbb Q$ can be replaced by quantification over elements satisfying $\bf finrational$.  
  
\begin{theorem}  \label{3.2}  The weak monadic second order theory of the structure $({\mathbb Q}, <)$ is first order
interpretable in each of its monoids $M$ of embeddings and $E$ of endomorphisms.
  \end{theorem}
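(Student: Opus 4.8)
The plan is to reduce Theorem \ref{3.2} to Theorem \ref{3.1} by exploiting the fact, already invoked in the introduction, that $G$ is a definable subset of both $M$ and $E$. Concretely, I would first exhibit a single formula $\gamma(x)$ of the language of monoid theory (or group theory supplemented by the monoid operation) such that for an element $f$ of $M$, respectively $E$, we have $M \models \gamma(f)$ (resp. $E \models \gamma(f)$) if and only if $f$ lies in $G$, i.e. $f$ is an automorphism. The natural characterisation is that $f \in G$ precisely when $f$ is invertible in the monoid, which is first order expressible as $\exists y(xy = 1 \wedge yx = 1)$; one must check that invertibility in $M$ and in $E$ genuinely picks out exactly the order-automorphisms and not some larger class, but since $M$ consists of embeddings and $E$ of endomorphisms of $(\mathbb{Q},<)$, an element with a two-sided inverse in either monoid is a bijection whose inverse also preserves order, hence an automorphism.

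Having isolated $G$ inside $M$ and inside $E$ by the formula $\gamma$, the second step is to observe that every formula used in the proof of Theorem \ref{3.1} --- namely ${\bf comp}$, ${\bf inf}$, ${\bf disj}$, ${\bf cont}$, ${\bf oppsupport}$, ${\bf rational}$, and the composite formulae ${\bf finrational}$ and ${\bf sameset}$ --- is built purely from the group multiplication and equality. I would relativise each such formula to $\gamma$: every quantifier $\exists z(\ldots)$ or $\forall z(\ldots)$ ranging over $G$ is replaced by $\exists z(\gamma(z) \wedge \ldots)$ or $\forall z(\gamma(z) \to \ldots)$, and every occurrence of the group inverse $x^{-1}$ is rewritten using the definable inverse on $\gamma$-elements. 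This yields formulae ${\bf finrational}^\gamma$ and ${\bf sameset}^\gamma$ of the language of $M$ (resp. $E$) whose satisfaction in $M$ (resp. $E$), restricted to tuples from $G$, coincides exactly with satisfaction of the original formulae in $G$. Since the interpretation of the weak monadic second order theory of $(\mathbb{Q},<)$ furnished by Theorem \ref{3.1} lives entirely on $\gamma$-elements, transporting it through this relativisation produces the required interpretation in $M$ and in $E$.

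The step I expect to be the main obstacle is verifying that the definability of $G$ inside $M$ and $E$ is genuinely first order and that inversion behaves uniformly. In particular, one must confirm that the inverse of a $\gamma$-element, although not a primitive operation of the monoid language, is uniquely and definably recovered from the defining formula, so that the relativisation of inverse-containing subformulae is well posed; this is routine given two-sided invertibility, but it must be stated. A secondary point requiring care is that the relativised quantifiers must range over $G$ and not over all of $M$ or $E$ at every nesting level, so I would check by induction on formula complexity that each of the auxiliary formulae listed above only ever quantifies over automorphisms in its intended reading, which is the case because all of the combinatorial notions (orbitals, parity, support, conjugacy) are defined for group elements. Once these bookkeeping verifications are in place, the result follows immediately, and I would state it as a short corollary-style argument rather than reproducing the entire construction of Theorem \ref{3.1} a second time.
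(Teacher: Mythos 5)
Your proposal is correct and follows essentially the same route as the paper: the paper's proof of Theorem \ref{3.2} is precisely the observation that $G$ is definable in $M$ and in $E$ as the set of invertible elements, after which the interpretation of Theorem \ref{3.1} is transported wholesale. Your additional bookkeeping (relativising quantifiers to the invertibility formula and recovering inverses definably) is the standard verification implicit in the paper's two-sentence argument, carried out explicitly.
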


This follows from the facts that $G$ is a definable subset of each of $M$ and $E$, being its set of invertible elements.
So we can just use the interpretation already given.

\end{document}